\newtheorem{theorem}{Theorem}[section]
\theoremstyle{definition}
\newtheorem{definition}{Definition}[section]
\theoremstyle{remark}
\newtheorem{remark}{Remark}[section]
\theoremstyle{proposition}
\newtheorem{proposition}{Proposition}[section]
\numberwithin{equation}{section}
\newtheorem{example}{Example}[section]
\theoremstyle{corollary}
\theoremstyle{errata}
\begin{document}

\title{A central $U(1)$-extension of a double Lie groupoid}
\author{ Naoya Suzuki}
\date{}
\maketitle
\begin{abstract}
In this paper, we introduce a notion of a central $U(1)$-extension of a double Lie groupoid and show that it defines a cocycle in the certain triple complex.
\end{abstract}

\section{Introduction}
In \cite{Beh}\cite{Beh2}\cite{Tu} and related papers, K. Behrend, J.-L. Tu, P. Xu and C. Laurent-Gengoux have developed a theory of central $U(1)$-extensions of 
Lie groupoids. On the other hand, there is a theory of double Lie groupoids due to R.Brown, K.C.H. Mackenzie, Mehta Rajan Amit and Tang, X \cite{Bro}\cite{Mac}\cite{Met}. In this paper, we combine these two theories and introduce a notion of a central $U(1)$-extension of a double Lie groupoid.
Given a double Lie groupoid, we can construct a bisimplicial manifold and a triple complex on it. If a double Lie groupoid have a central $U(1)$-extension, we show that we can construct a $3$-cocycle in the
triple complex.
\section{
Lie groupoids and double complexes}

At first we recall the definition of Lie groupoids following \cite{Mik}.
\begin{definition}
A Lie groupoid $\Gamma _1$ over a manifold  $\Gamma _0$ is a pair $(\Gamma _1,\Gamma _0)$
equipped with following differentiable maps:\\
(i) surjective submersions $s,t:\Gamma _1 \rightarrow \Gamma _0$ called the source and target maps respectively;\\
(ii) $m:\Gamma _2 \rightarrow \Gamma _1$ called multiplication, where $\Gamma _2 := \{ (x_1,x_2) \in \Gamma _1 \times \Gamma _1 |\enspace t(x_{1})=s(x_{2}) \}$;\\
(iii) an injection $e:\Gamma _0 \rightarrow \Gamma _1$ called identities;\\
(iv) $\iota : \Gamma _1 \rightarrow \Gamma _1$ called inversion.\\
These maps must satisfy:\\
(1) (associative law) $m(m(x_1,x_2),x_3)=m(x_1,m(x_2,x_3))$ if one is defined, so is the other;\\
(2) (identities) for each $x \in \Gamma _1, (e(s(x)),x) \in \Gamma _2, (x,e(t(x)))  \in \Gamma _2$ and $m(e(s(x)),x)=m(x,e(t(x)))=x$;\\
(3) (inverses) for each $x \in \Gamma _1, (x,\iota(x)) \in \Gamma _2, (\iota(x),x)  \in \Gamma _2, m(x,\iota(x))=e(s(x))$, and $m(\iota(x),x)=e(t(x))$.
\end{definition}
In this paper we denote a Lie groupoid by $\Gamma _1 \rightrightarrows \Gamma _0$.

\begin{example}
{Suppose that} $G$ is a Lie group. Then we have a Lie groupoid $G \rightrightarrows *$.
\end{example}

\begin{example}
{Suppose that} $M$ is a manifold. Then we have a Lie groupoid $\Gamma _1=M \times M$, $\Gamma _0 =M$. The source map $s: M \times M \rightarrow M$ is a projection map into the
first factor and  the target map $t$ is a projection into the second factor.
This groupoid $M \times M \rightrightarrows M$
is called a pair groupoid.
\end{example}

\begin{example}
{Suppose that} $G$ is a Lie group acting on a manifold $M$ by left. Then we have a Lie groupoid $\Gamma _1=G \times M$, $\Gamma _0 =M$.
The source map $s$ is defined as $s(g,u)=u$ and  the target map $t$ is defined as $t(g,u)=gu$. This groupoid $M  \rtimes G \rightrightarrows M$
is often called an action groupoid.
\end{example}
\begin{example}
{Suppose that} $M$ is a manifold and $\{ U_{\alpha } \}$ is a covering of $M$. Then we have a Lie groupoid $\Gamma _1=\coprod (U_{\alpha} \cap U_{\beta}) $,  $\Gamma _0 =\coprod U_{\alpha}$.
The source map $s$ is an inclusion map into $U_{\alpha}$ and  the target map $t$ is  an inclusion map into $U_{\beta}$.
\end{example}

Let $\Gamma _1 \rightrightarrows \Gamma _0$ be a Lie groupoid and denote by $s,t,m$ the source and target maps, and the multiplication of it respectively. Then we 
can define a simplicial manifold $N\Gamma$ as follows:
$$N\Gamma(p)  := \{ (x_1, \cdots , x_p) \in \overbrace{\Gamma_1 \times \cdots \times \Gamma_1 }^{p-times}  \enspace | \enspace t(x_{j})=s(x_{j+1}) \enspace j=1 , \cdots , p-1 \},$$  
face operators \enspace ${\varepsilon}_{i} : N\Gamma(p) \rightarrow N\Gamma(p-1)  $
$$
{\varepsilon}_{i}(x_1 , \cdots , x_p )=\begin{cases}
(x_2 , \cdots , x_p )  &  i=0 \\
(x_1 , \cdots ,m(x_i ,x_{i+1}) , \cdots , x_p )  &  i=1 , \cdots , p-1 \\
(x_1 , \cdots , x_{p-1} )  &  i=p.
\end{cases}
$$

Then recall how to construct a double complex associated to a simplicial manifold.

\begin{definition}
For any simplicial manifold $ \{ X_* \}$ with face operators $\{ {\varepsilon}_* \}$, we define a double complex as follows:
$${\Omega}^{p,q} (X) := {\Omega}^{q} (X_p).$$
Derivatives are:
$$ d' := \sum _{i=0} ^{p+1} (-1)^{i} {\varepsilon}_{i} ^{*}  , \qquad  d'' := (-1)^{p} \times {\rm the \enspace exterior \enspace differential \enspace on \enspace }{ \Omega ^*(X_p) }. $$
\end{definition}

To any simplicial manifold $\{ X_* \}$, we can associate a topological space $\parallel X_* \parallel $ 
called the fat realization.

\begin{theorem}[\cite{Bot2} \cite{Dup2} \cite{Mos}]
{There exists a ring isomorphism}
$$ H({\Omega}^{*} (X_*))  \cong  H^{*} (\parallel X_* \parallel).$$
Here ${\Omega}^{*} (X_*)$ means the total complex. 
\end{theorem}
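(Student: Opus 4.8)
The statement has two parts --- an additive isomorphism and the assertion that it respects products --- and the plan is to route both through Dupont's differential graded algebra of compatible forms on the semi-simplicial space $\coprod_p \Delta^p \times X_p$. Write $A^n(X_*)$ for the set of families $\{\omega_p\}$, with $\omega_p$ an $n$-form on $\Delta^p\times X_p$, that are compatible under the coface inclusions $\varepsilon^i\times\mathrm{id}:\Delta^{p-1}\times X_p\hookrightarrow\Delta^p\times X_p$ and the pulled-back face maps $\mathrm{id}\times\varepsilon_i:\Delta^{p-1}\times X_p\to\Delta^{p-1}\times X_{p-1}$; with the exterior differential and the wedge product this is a commutative DGA. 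There are two natural maps attached to it: fibrewise integration over the simplices, $A^*(X_*)\to\mathrm{Tot}\,\Omega^{\bullet,\bullet}(X)$, sending $\{\omega_p\}$ to $\{\int_{\Delta^p}\omega_p\}$ (the sign $(-1)^p$ in the definition of $d''$, together with the usual Stokes signs, is exactly what makes this a cochain map), and the restriction-to-skeleta maps that will let us compare $A^*(X_*)$ with the cohomology of $\|X_*\|$. The plan is first to show that fibre integration is a quasi-isomorphism of DGAs, and then to identify $H$ of $A^*(X_*)$ with $H^*(\|X_*\|)$.

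For the first point I would run Dupont's simplicial de Rham argument: build the explicit Poincaré-lemma homotopy operator $h$ on each fibre $\Delta^p\to\ast$, check it is compatible with the face identifications, and deduce that fibre integration is a quasi-isomorphism --- for instance by filtering both $A^*(X_*)$ and $\mathrm{Tot}\,\Omega^{\bullet,\bullet}(X)$ by simplicial degree and observing that on the associated graded the map is, level by level, the integration quasi-isomorphism $\Omega^*(\Delta^p\times X_p)\to\Omega^*(X_p)$. Since the relevant filtration is first-quadrant, hence bounded in each total degree, the induced map on total cohomology is an isomorphism. Because $A^*(X_*)$ is a DGA and $\mathrm{Tot}\,\Omega^{\bullet,\bullet}(X)$ carries the product for which fibre integration is multiplicative (equivalently, the product transported from $A^*(X_*)$), this is a ring isomorphism $H(A^*(X_*))\cong H(\Omega^*(X_*))$.

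For the comparison with $\|X_*\|$, the cleanest uniform argument replaces forms by real singular cochains. Let $C^{p,q}(X):=C^q(X_p;\mathbb{R})$ with $d'=\sum_{i=0}^{p+1}(-1)^i\varepsilon_i^*$ and $d''=\pm$ the singular coboundary. The fat realization carries a skeletal filtration in which $\|X_*\|_p/\|X_*\|_{p-1}$ is $\Delta^p\times X_p$ with $\partial\Delta^p\times X_p$ collapsed to a point, so $H^{p+q}(\|X_*\|_p,\|X_*\|_{p-1})\cong H^q(X_p;\mathbb{R})$ by Künneth; the associated spectral sequence has $E_1^{p,q}=H^q(X_p;\mathbb{R})$ with $d_1=\sum_{i}(-1)^i\varepsilon_i^*$, and the standard analysis of realizations of semi-simplicial spaces identifies $H^*(\|X_*\|)$, as a ring under cup product, with $H$ of $\mathrm{Tot}\,C^{\bullet,\bullet}(X)$ (convergence being handled by Milnor's $\varprojlim^1$-sequence, since the filtration is eventually constant in each fixed degree). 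Integration of forms over smooth simplices gives a map $\Omega^*(X_p)\to C^*(X_p;\mathbb{R})$ that is natural in $X_p$, hence commutes with every $\varepsilon_i^*$, so it assembles to a morphism of double complexes $\Omega^{\bullet,\bullet}(X)\to C^{\bullet,\bullet}(X)$ inducing on $E_1$ the de Rham isomorphism $H^q_{dR}(X_p)\cong H^q(X_p;\mathbb{R})$ and therefore an isomorphism of total cohomologies. Composing with the previous step yields the additive isomorphism $H(\Omega^*(X_*))\cong H^*(\|X_*\|)$; multiplicativity follows because the de Rham map intertwines wedge and cup up to a natural homotopy and the total-complex products on both sides are the ones already identified.

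I expect the main obstacle to be the ring-structure bookkeeping: the product on $\mathrm{Tot}\,\Omega^{\bullet,\bullet}(X)$ is not literally the wedge product but a sum over shuffles (equivalently, the one transported from $A^*(X_*)$), and checking that fibre integration, the de Rham map, and the skeletal identification all respect products only up to coherent homotopies requires care. The convergence issues are routine (first-quadrant double complexes and an eventually constant skeletal filtration), and the only place where the \emph{fat} realization matters is that it makes the skeletal filtration match the column filtration of the double complex with no degeneracy corrections. If one wanted only the additive statement, the singular-cochain spectral-sequence comparison of the third paragraph, without $A^*(X_*)$, would already suffice.
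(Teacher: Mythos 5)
The paper offers no proof of this theorem beyond citing Bott--Shulman--Stasheff, Dupont and Mostow--Perchick, and your argument is precisely the proof contained in those references: Dupont's complex $A^*(X_*)$ of compatible forms with fibre integration as a quasi-isomorphism, plus the skeletal-filtration spectral sequence of the fat realization compared with the double complex via the de Rham map. So your proposal is correct and takes essentially the same route as the paper's cited sources (your aside about the total-complex product being a ``sum over shuffles'' should really be the Alexander--Whitney-type front/back-face product, but you hedge this correctly by transporting the product from $A^*(X_*)$).
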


\begin{example}
{In the case of an action groupoid} $M  \rtimes G \rightrightarrows M$ for a compact Lie group $G$,
$H({\Omega}^{*} (N\Gamma)) $ is isomorphic to the Borel model of the equivariant cohomology $H^* _G(M) := H^*(EG \times _{G} M)$.
\end{example}
\begin{example}
{In the case of the groupoid }$\coprod (U_{\alpha} \cap U_{\beta}) \rightrightarrows \coprod U_{\alpha}$ for a good covering $\{ U_{\alpha } \}$ in example 2.4, $H({\Omega}^{*} (N\Gamma)) $ is isomorphic to $H^*(M)$.\\
\end{example}

\section{A central $U(1)$-extension of a Lie groupoid}

Now we recall the notion of a central $U(1)$-extension of a Lie groupoid in \cite{Beh2}.

\begin{definition}
A central $U(1)$-extension of a Lie groupoid $\Gamma _1 \rightrightarrows \Gamma _0$ consists of a morphism of Lie groupoids
$$
\begin{array}{ccc}
{\widehat{\Gamma_1}}&{\rightrightarrows}&{{\Gamma_0}}\\
{\downarrow{\pi}}&&{\downarrow{\rm id}}\\
{\Gamma_1}&{\rightrightarrows}&{\Gamma_0}
\end{array}
$$
and a right $U(1)$-action on ${\widehat{\Gamma}_1}$, making $\pi:{\widehat{\Gamma}_1} \rightarrow {\Gamma_1}$ a principal $U(1)$-bundle. 
For any $z_1,z_2 \in U(1)$ and $(\hat{x_1},
\hat{x_2}) \in N \widehat{\Gamma}(2) := \{ (\hat{y_1},\hat{y_2}) \in \widehat{\Gamma}_1 \times \widehat{\Gamma}_1 |~ t(\hat{y_1}) =
s(\hat{y_2}) \}$, the equation $\hat{m}( \hat{x_1}z_1,  \hat{x_2}z_2) =  \hat{m}(\hat{x_1} , \hat{x_2})z_1z_2$ holds.
\end{definition}

Let $\widehat{\Gamma}_1 \rightarrow \Gamma _1 \rightrightarrows \Gamma _0$ be a central $U(1)$-extension of a Lie groupoid 
$\Gamma _1 \rightrightarrows \Gamma _0$. Using the face operators $\{ {\varepsilon}_{i} \}: N\Gamma(2) \rightarrow \Gamma_1 $, 
we can construct the $U(1)$-bundle over $N\Gamma(2)$ as $\delta \widehat{\Gamma}_1 :=  {\varepsilon _0}^* \widehat{\Gamma}_1 \otimes ({{\varepsilon} _1}^* \widehat{\Gamma}_1)^{{\otimes}-1} \otimes
{{\varepsilon} _2}^* \widehat{\Gamma}_1 $.
Here we define the tensor product $S \otimes T$ of $U(1)$-bundles $S$ and $T$ over $M$ as:
$$ S \otimes T := \bigcup _{x \in M} (S_x \times T_x) / (s,t) \sim (su,tu^{-1}),~~ (u \in U(1)).$$

Note that there is a natural section $\hat{s}_{nt}$ of 
$\delta \widehat{\Gamma}_1 $  defined as: 
$$\hat{s}_{nt}(x_1,x_2):=[((x_1,x_2),\hat{x}_2 ) , \\ ((x_1,x_2),\hat{m}(\hat{x} _1 ,\hat{x} _2)  )^{\otimes -1},((x_1,x_2),\hat{x}_1 ) ].$$ 
Furthermore, because of the associative law of $\Gamma _1 \rightrightarrows \Gamma _0$,
$\delta(\delta \widehat{\Gamma}_1))$ is canonically isomorphic to the product bundle and $\delta \hat{s} _{nt}=1$ holds.\\

A bundle gerbe, which is introduced by M.K.Murray\cite{Mu}, is a very important example of central $U(1)$-extensions of Lie groupoids.
\begin{definition}[Murray-Stevenson, \cite{Mu}\cite{MS}]
Given a surjective submersion $\phi : Y \rightarrow M $, we obtain the groupoid $Y^{[2]} \rightrightarrows Y$ where
$Y^{[2]}$ is the fiber product defined as $Y^{[2]}:=\{(y_1,y_2)| \phi(y_1)= \phi(y_2) \}$. The source and target maps are defined as $s(y_1,y_2)=y_2,
t(y_1,y_2)=y_1$ respectively.

A bundle gerbe over $M$ is a
 pair of $\phi : Y \rightarrow M $, a principal $U(1)$-bundle $\widehat{Y^{[2]}}$ over $Y^{[2]}$ and 
  a section $\hat{s}$ of $\delta \widehat{Y^{[2]}}$ which satisfies $\delta \hat{s} =1$.\\
\end{definition}

\begin{remark}
Without the assumption of the existence of $\hat{s}$, $\delta \widehat{Y^{[2]}}$ is not necessarily trivial.
By using $\hat{s}$, we can construct a multiplication $\hat{m}:\widehat{Y^{[2]}} \times \widehat{Y^{[2]}} \rightarrow \widehat{Y^{[2]}}$
such that $\hat{s}$ is a natural section of $\delta \widehat{Y^{[2]}}$. Hence we can recognize  bundle gerbe as a kind of a central $U(1)$-extension of a Lie groupoid.\\
\end{remark}

\section{A cocycle in the double complex}

For any connection $\theta$ on a $U(1)$-bundle $\widehat{\Gamma}_1 \rightarrow {\Gamma}_1$, there is the induced connection $\delta \theta $ on $\delta \widehat{\Gamma}_1 $.

\begin{proposition}
{Let} $c_1 (\theta )$ denote the 2-form on $\Gamma_1$ which hits $\left( \frac{-1}{2 \pi i} \right) d \theta \in \Omega ^2 (\widehat{\Gamma}_1)$ by $\pi ^{*}$, and $\hat{s}$ a global section of
$\delta \widehat{\Gamma}_1 $ such that $\delta \hat{s}:= {\varepsilon}_{0} ^* \hat{s} \otimes ({\varepsilon}_{1} ^* \hat{s})^{\otimes -1} \otimes {\varepsilon}_{2} ^* \hat{s} \otimes ({\varepsilon}_{3} ^* \hat{s})^{\otimes -1}=1$. Then the following equations hold.
$$ ({\varepsilon}_{0} ^* - {\varepsilon}_{1} ^* + {\varepsilon}_{2} ^* ) c_1 (\theta ) = \left( \frac{-1}{2 \pi i} \right)d(\hat{s}^{*} (\delta \theta )) \enspace \in \Omega ^2 (N\Gamma(2)) ,$$ 
$$({\varepsilon}_{0} ^* - {\varepsilon}_{1} ^* + {\varepsilon}_{2} ^* - {\varepsilon}_{3} ^*)(\hat{s}^{*} (\delta \theta ))=0.$$
\end{proposition}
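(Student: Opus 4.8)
The plan is to prove the two equations separately, the first by a direct curvature computation pulled back along the natural section, and the second by a purely formal simplicial identity.

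For the first equation, I would start from the defining property of $\delta\theta$: since $\delta\widehat{\Gamma}_1 = {\varepsilon}_0^*\widehat{\Gamma}_1 \otimes ({\varepsilon}_1^*\widehat{\Gamma}_1)^{\otimes -1} \otimes {\varepsilon}_2^*\widehat{\Gamma}_1$, the induced connection $\delta\theta$ is ${\varepsilon}_0^*\theta - {\varepsilon}_1^*\theta + {\varepsilon}_2^*\theta$ in the appropriate sense on the tensor-product bundle. Hence its curvature is $d(\delta\theta) = {\varepsilon}_0^*d\theta - {\varepsilon}_1^*d\theta + {\varepsilon}_2^*d\theta$. Now pulling back by the section $\hat{s}$ and using that $\pi^* c_1(\theta) = \left(\frac{-1}{2\pi i}\right)d\theta$, together with the compatibility ${\varepsilon}_i^*\pi = \pi_{\delta}\circ{\varepsilon}_i$ at the bundle level (so that ${\varepsilon}_i^*$ of the base-form $c_1(\theta)$ corresponds under pullback to ${\varepsilon}_i^*$ of the connection form upstairs), I would get $d(\hat{s}^*(\delta\theta)) = \hat{s}^*d(\delta\theta) = \hat{s}^*\pi_\delta^*\big(({\varepsilon}_0^* - {\varepsilon}_1^* + {\varepsilon}_2^*)\big(\frac{-1}{2\pi i}\big)^{-1}\cdot(\cdots)\big)$; carefully tracking the constant $\left(\frac{-1}{2\pi i}\right)$ and the fact that $\hat{s}^*\pi_\delta^* = \mathrm{id}$ on base forms yields exactly $({\varepsilon}_0^* - {\varepsilon}_1^* + {\varepsilon}_2^*)c_1(\theta) = \left(\frac{-1}{2\pi i}\right)d(\hat{s}^*(\delta\theta))$. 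The one genuine subtlety here is to justify that the curvature of a tensor product of pulled-back $U(1)$-bundles with the tensor-product connection is the alternating sum of the curvatures, and that this descends correctly to the base via any section; this is standard but should be stated as a lemma.

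For the second equation, the point is that the $4$-term alternating sum of face pullbacks applied to $\hat{s}^*(\delta\theta)$ is, up to the identification $\delta(\delta\widehat{\Gamma}_1) \cong$ (product bundle), nothing but $(\delta\hat{s})^*$ applied to the induced connection $\delta(\delta\theta)$ on $\delta(\delta\widehat{\Gamma}_1)$. Since we are given $\delta\hat{s} = 1$ (i.e. $\hat{s}$ is the constant section $1$ of the canonically trivial bundle $\delta(\delta\widehat{\Gamma}_1)$), and since the induced connection $\delta(\delta\theta)$ on the canonically trivial bundle is the trivial (product) connection — this uses the simplicial identity ${\varepsilon}_i{\varepsilon}_j = {\varepsilon}_{j-1}{\varepsilon}_i$ for $i<j$ which forces the $d'$-closedness $\sum(-1)^i{\varepsilon}_i^*(\cdots) = 0$ at the level of connection forms — the pullback of the trivial connection by the constant section $1$ is $0$. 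Writing this out, $({\varepsilon}_0^* - {\varepsilon}_1^* + {\varepsilon}_2^* - {\varepsilon}_3^*)(\hat{s}^*(\delta\theta)) = (\delta\hat{s})^*(\delta\delta\theta) = 1^*(\text{product connection}) = 0$.

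I expect the main obstacle to be bookkeeping rather than conceptual: matching the various tensor-product conventions for connections on $U(1)$-bundles with the sign conventions in the face operators ${\varepsilon}_i$ and in the definition of $d'$, so that the signs in $({\varepsilon}_0^* - {\varepsilon}_1^* + {\varepsilon}_2^*)$ and $({\varepsilon}_0^* - {\varepsilon}_1^* + {\varepsilon}_2^* - {\varepsilon}_3^*)$ come out exactly as stated and the constant $\left(\frac{-1}{2\pi i}\right)$ lands on the correct side. It is also worth being careful that ``$\delta\hat{s}=1$'' is being used in two roles — as a hypothesis in the first equation's setup it is not needed, but in the second equation it is the crucial input — so the two parts should be proved in the order given, with the first part requiring only the existence of $\hat{s}$ and the second part requiring $\delta\hat{s}=1$.
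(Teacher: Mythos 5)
Your proposal is correct. Note that the paper gives no proof of this proposition at all --- it simply cites Murray--Stevenson and Suzuki --- and your computation (the curvature of the tensor-product connection on $\delta \widehat{\Gamma}_1$ descends to the alternating sum $({\varepsilon}_{0} ^* - {\varepsilon}_{1} ^* + {\varepsilon}_{2} ^*)$ of base curvatures, pulled back to the base by any section; and, for the second identity, the canonical trivialization of $\delta(\delta \widehat{\Gamma}_1)$ carries $\delta(\delta\theta)$ to the product connection, so the constant section $\delta\hat{s}=1$ pulls it back to zero) is exactly the standard bundle-gerbe argument used in those references, including your correct observation that only the second identity actually uses $\delta\hat{s}=1$.
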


\begin{proof}
See \cite{Mur}\cite{Mur2} or \cite{Suz}.
\end{proof}

\par

The proposition above give the cocycle below.

$$
\begin{CD}
0 \\
@AA{-d}A \\
c_1 ( \theta ) \in {\Omega}^{2} (\Gamma_1 )@>{{\varepsilon}_{0} ^* - {\varepsilon}_{1} ^* + {\varepsilon}_{2} ^* }>>{\Omega}^{2} (N\Gamma(2))\\
@.@AA{d}A\\
@.-\left( \frac{-1}{2 \pi i} \right) \hat{s}^{*} (\delta \theta ) \in {\Omega}^{1} (N\Gamma(2))@>{{\varepsilon}_{0} ^* - {\varepsilon}_{1} ^* + {\varepsilon}_{2} ^* - {\varepsilon}_{3} ^*}>> 0.
\end{CD}
$$

Let $\widehat{\Gamma}_1 \rightarrow \Gamma _1 \rightrightarrows \Gamma _0$ be a central $U(1)$-extension of a groupoid and
$\theta$ be a connection form of the $U(1)$-bundle $\widehat{\Gamma}_1 \rightarrow \Gamma _1$.  In \cite{Beh}\cite{Beh2} and 
related papers, \\
K. Behrend and P. Xu add $B \in \Omega^2(\Gamma_0)$ to $\theta$ and call $\theta +B$ a pseudo-connection of $\widehat{\Gamma}_1 \rightarrow \Gamma _1 \rightrightarrows \Gamma _0$.
$$
\begin{CD}
0 \\
@AA{d}A \\
{ {\Omega}^{3} (\Gamma_0 )}@>{d'}>>*\\
@AA{d}A@AA{-d}A\\
{B \in {\Omega}^{2} ({\Gamma}_0)}@>{d'}>>{ {\Omega}^{2} (\widehat{\Gamma}_1)}@>{d'}>> * \\
@.@AA{-d}A@AA{d}A\\
@.{{\theta} \in {\Omega}^{1} (\widehat{\Gamma}_1)}@>{d'}>>{ {\Omega}^{1} (N\widehat{\Gamma}(2))}@>{d'}>> 0.
\end{CD}
$$

\begin{theorem}[\cite{Beh}\cite{Beh2}]
For any pseudo-connection $B+\theta$, there exists a $3$-cocycle $\mu + \omega + \eta,~\mu \in {\Omega}^{3} (\Gamma_0 ), \omega \in {\Omega}^{2} (\Gamma_1),\eta \in {\Omega}^{1} (N\Gamma_1(2)) $ which 
satisfies $\pi^*(\mu + \omega + \eta)=(d'+d'')(B+\theta)$. This $3$-cocycle is called a pseudo-curvature of $B+\theta$.
\end{theorem}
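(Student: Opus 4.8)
The plan is to build the $3$-cocycle explicitly, proceeding up the staircase of the triple (here double) complex drawn above, using the two equations of Proposition 4.1 as the engine. I would first set $\omega := c_1(\theta) + d'' B = c_1(\theta) + dB \in \Omega^2(\Gamma_1)$ (up to the sign convention $d'' = -d$ in degree $2$, which I will track carefully), so that $\pi^*\omega = \left(\tfrac{-1}{2\pi i}\right) d\theta + \pi^* dB = d(\theta + \pi^* B)$ — this is the statement that $\pi^*\omega$ agrees with $(d'+d'')(B+\theta)$ in the bidegree $(1,2)$ slot, modulo the $d'\theta$ term which lives one column to the right. Next, to kill $d'\omega \in \Omega^2(N\Gamma(2))$ I would invoke the first equation of Proposition 4.1: $({\varepsilon}_0^* - {\varepsilon}_1^* + {\varepsilon}_2^*)c_1(\theta) = \left(\tfrac{-1}{2\pi i}\right) d(\hat s^*(\delta\theta))$, together with $d' d'' B = - d''(d' B)$ and the fact that $d'B \in \Omega^2(N\Gamma(1)) = \Omega^2(\Gamma_1)$ can be compared with $d'\theta$ via $\pi^*$. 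This forces the choice $\eta := -\left(\tfrac{-1}{2\pi i}\right)\hat s^*(\delta\theta) + (\text{a correction built from } d'B \text{ and } \theta)$, and the second equation of Proposition 4.1, $d'(\hat s^*(\delta\theta)) = 0$ on $N\Gamma(2)$, is exactly what guarantees that the $d'$-image of $\eta$ vanishes, closing the cocycle at the bottom.

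Then I would produce $\mu \in \Omega^3(\Gamma_0)$: applying $d''$ to the $(1,2)$-entry and using $d'' \pi^* B = \pi^*(dB)$ together with $d' d''(B+\theta) = - d'' d'(B+\theta)$, the obstruction to closing up in degree $3$ is a $d''$-closed form on $\Gamma_0$, and since $d'' = \pm d$ is just the de Rham differential on $\Gamma_0 = N\Gamma(0)$, one sets $\mu := dB$ (again up to sign), which is automatically $d$-closed, hence $d''$-closed, and $d'\mu = d' dB$ must be checked to match the $d''$-image of $\omega$ — this is the top compatibility square. Assembling, $\mu + \omega + \eta$ is a total-degree-$3$ element; verifying $(d'+d'')(\mu+\omega+\eta) = 0$ amounts to checking the four squares of the staircase, each of which reduces either to $d^2 = 0$, to the (anti)commutation of $d'$ and $d''$, or to one of the two displayed equations of Proposition 4.1. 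The identity $\pi^*(\mu+\omega+\eta) = (d'+d'')(B+\theta)$ then holds essentially by construction, since every summand of the left side was defined to be the $\pi^*$-preimage of the corresponding summand on the right.

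The main obstacle is bookkeeping rather than conceptual: the sign convention $d'' = (-1)^p\, d_{\mathrm{dR}}$ interacts with the alternating signs in $d' = \sum(-1)^i {\varepsilon}_i^*$ and with the sign in $(\cdot)^{\otimes -1}$ used to define $\delta$ on $U(1)$-bundles and their sections, so the cross terms in $(d'+d'')(B+\theta)$ — in particular the term $d'\theta \in \Omega^1(N\widehat\Gamma(2))$ and its relation to $\hat s^*(\delta\theta)$ — must be normalized consistently. Concretely, the key lemma to nail down is that $\pi^*$ (now the pullback $N\widehat\Gamma(2) \to N\Gamma(2)$ combined with the bundle-tensor structure) sends $\hat s^*(\delta\theta)$ to $d'\theta = ({\varepsilon}_0^* - {\varepsilon}_1^* + {\varepsilon}_2^*)\theta$; this is where the defining property of the natural section $\hat s_{nt}$ and the $U(1)$-equivariance $\hat m(\hat x_1 z_1, \hat x_2 z_2) = \hat m(\hat x_1,\hat x_2)z_1 z_2$ enter. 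Once that identification is fixed, the rest is a routine diagram chase through the double complex, and the extension to the triple complex of a double Lie groupoid in the later sections will then be the "same" argument carried out in each of the two simplicial directions.
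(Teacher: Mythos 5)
The paper itself gives no proof of this theorem --- it is quoted from Behrend--Xu, and the explicit formulas you are aiming at appear (for $\hat{s}=\hat{s}_{nt}$, $B=0$) only in Proposition 4.2, whose proof is again a citation. Your strategy --- descend each bigraded component of $(d'+d'')(B+\theta)$ along $\pi^*$ and check the cocycle condition square by square using Proposition 4.1 --- is essentially the standard Behrend--Xu construction, and the key lemma you single out is indeed the crux: $\hat{s}_{nt}^{*}(\delta\theta)$ is the descent of $d'\theta=({\varepsilon}_0^*-{\varepsilon}_1^*+{\varepsilon}_2^*)\theta$, which is $U(1)$-basic precisely because the extension is central and multiplicative. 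A cleaner packaging that avoids most of the sign bookkeeping you worry about: note that \emph{every} component of $(d'+d'')(B+\theta)$ is basic for the relevant $U(1)$-actions, define $\mu+\omega+\eta$ by the descent condition $\pi^*(\mu+\omega+\eta)=(d'+d'')(B+\theta)$, and obtain the cocycle property for free from $(d'+d'')^2=0$ together with injectivity of $\pi^*$ ($\pi$ being a surjective submersion); the explicit formulas of Proposition 4.2 then drop out as a corollary rather than an input.

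Two bidegree slips in your write-up do need fixing. First, $\omega:=c_1(\theta)+d''B$ is ill-typed: $d''B=\pm\, dB\in\Omega^3(\Gamma_0)$ is the term that produces $\mu$, whereas the correction to $c_1(\theta)$ in $\Omega^2(\Gamma_1)$ must be the simplicial term $d'B=({\varepsilon}_0^*-{\varepsilon}_1^*)B$ (i.e.\ $s^*B-t^*B$); accordingly $\pi^*\omega$ should be matched against $d''\theta+d'B$, not against ``$d(\theta+\pi^*B)$'' (and $\pi^*B$ does not literally make sense, since $B$ lives on $\Gamma_0$). For the same reason, comparing $d'B$ with $d'\theta$ is a mismatch of bidegrees: $d'B$ sits in $\Omega^2(\Gamma_1)$ while $d'\theta$ sits in $\Omega^1(N\widehat{\Gamma}(2))$. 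Second, the ``correction built from $d'B$ and $\theta$'' in your $\eta$ is spurious: the bidegree-$(2,1)$ component of $(d'+d'')(B+\theta)$ is exactly $d'\theta$, since no single application of $d'$ or $d''$ can carry $B$ into $\Omega^1(N\Gamma(2))$; hence $\eta$ is, up to normalization, $\hat{s}_{nt}^{*}(\delta\theta)$ with no correction at all. Finally, if you retain the factor $\frac{-1}{2\pi i}$ from Proposition 4.1 you cannot literally have $\pi^*(\mu+\omega+\eta)=(d'+d'')(B+\theta)$; a normalization of $\theta$ must be fixed once and for all (the paper is itself loose on this point).
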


$$
\begin{CD}
0 \\
@AA{d}A \\
{ \mu \in {\Omega}^{3} (\Gamma_0 )}@>{d'}>>{\Omega}^{3} (\Gamma_1)\\
@.@AA{-d}A\\
@.{ \omega \in {\Omega}^{2} (\Gamma_1)}@>{d'}>> {\Omega}^{2} (N\Gamma(2)) \\
@.@.@AA{d}A\\
@.@.{ \eta \in {\Omega}^{1} (N\Gamma(2))}@>{d'}>> 0.
\end{CD}
$$

\begin{theorem}[\cite{Beh}\cite{Beh2}]
Assume that $\Gamma_1 \rightrightarrows \Gamma _0$ is a proper Lie groupoid. Given any $3$-cocycle $\mu + \omega + \eta \in Z^3_{dR}(\Gamma_*)$
such that

(1)~$[\mu + \omega + \eta]$ is integral, and

(2)~$\mu$ is exact.

Then there exists a Lie groupoid $S^1$-central extension $\widehat{\Gamma}_1 \rightarrow \Gamma _1 \rightrightarrows \Gamma _0$
and a pseudo-connection $\theta +B \in \Omega^1(\widehat{\Gamma}_1) \oplus \Omega^2(\Gamma_0)$ such that its pseudo-curvature is equal to $\mu + \omega + \eta$.
\end{theorem}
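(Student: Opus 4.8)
The plan is to split the statement into three moves: (i) use integrality together with properness to manufacture the central extension $\widehat{\Gamma}_{1}\to\Gamma_{1}\rightrightarrows\Gamma_{0}$; (ii) put an arbitrary pseudo-connection on it and read off its pseudo-curvature via Theorem~4.1; and (iii) correct the pseudo-connection inside the de~Rham total complex of $N\Gamma$ so that its pseudo-curvature becomes exactly $\mu+\omega+\eta$. Properness is used in (i) and again in (iii), and essentially all the content sits in (i).

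For (i): by hypothesis $[\mu+\omega+\eta]$ is the de~Rham image of an integral class $u\in H^{3}(\parallel N\Gamma\parallel;\mathbb{Z})$ under the isomorphism of Theorem~2.1. I would work with the exponential sequence of sheaves $0\to\mathbb{Z}\to\mathcal{A}^{0}\to\mathcal{A}^{0}_{U(1)}\to 0$ on the simplicial manifold $N\Gamma$, $\mathcal{A}^{0}$ denoting smooth real functions. Since $\Gamma$ is proper, a Haar system together with a cut-off function on $\Gamma_{0}$ produces averaging operators forcing $H^{p}(N\Gamma,\mathcal{A}^{0})=0$ for every $p>0$ (the vanishing of differentiable groupoid cohomology of a proper groupoid; see \cite{Beh}, \cite{Beh2}). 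Hence the connecting homomorphism $H^{2}(N\Gamma,\mathcal{A}^{0}_{U(1)})\to H^{3}(N\Gamma,\mathbb{Z})=H^{3}(\parallel N\Gamma\parallel;\mathbb{Z})$ is surjective, and---again using properness to keep the extension on $\Gamma$ rather than on a refinement---a preimage of $u$ is represented by an $S^{1}$-central extension $\widehat{\Gamma}_{1}\to\Gamma_{1}\rightrightarrows\Gamma_{0}$ whose Dixmier--Douady class, viewed in de~Rham cohomology, is $[\mu+\omega+\eta]$.

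For (ii)--(iii): choose any connection form $\theta_{0}$ on $\widehat{\Gamma}_{1}\to\Gamma_{1}$ and set $B_{0}:=0$. By Theorem~4.1 the pseudo-connection $\theta_{0}+B_{0}$ has a pseudo-curvature, and by Proposition~4.1 and the discussion after it this $3$-cocycle is $0+c_{1}(\theta_{0})-\bigl(\tfrac{-1}{2\pi i}\bigr)\hat{s}^{*}(\delta\theta_{0})$; it is a de~Rham representative of the Dixmier--Douady class of $\widehat{\Gamma}_{1}$, which by construction is $[\mu+\omega+\eta]$. Thus the two $3$-cocycles differ by $(d'+d'')\xi$ with $\xi=\xi_{0}+\xi_{1}+\xi_{2}$, $\xi_{0}\in\Omega^{0}(N\Gamma(2))$, $\xi_{1}\in\Omega^{1}(\Gamma_{1})$, $\xi_{2}\in\Omega^{2}(\Gamma_{0})$. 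Comparing $\Omega^{0}(N\Gamma(3))$-components forces $d'\xi_{0}=0$; properness again gives $H^{2}\bigl(\Omega^{0}(N\Gamma(\bullet)),d'\bigr)=0$, so $\xi_{0}=d'g$ for some $g\in C^{\infty}(\Gamma_{1})$, and after replacing $\xi$ by $\xi-(d'+d'')g$ we may assume $\xi_{0}=0$. The $\Omega^{3}(\Gamma_{0})$-component then reads $\mu=d\xi_{2}$---this is where hypothesis~(2) is used, allowing us to take $B:=\xi_{2}$---while the remaining two components are absorbed by replacing $\theta_{0}$ with $\theta:=\theta_{0}+\pi^{*}\xi_{1}$, still a connection since $\pi^{*}\xi_{1}$ is $U(1)$-basic, using $c_{1}(\theta)=c_{1}(\theta_{0})-\bigl(\tfrac{1}{2\pi i}\bigr)d\xi_{1}$ and $\hat{s}^{*}(\delta\theta)=\hat{s}^{*}(\delta\theta_{0})+(\varepsilon_{0}^{*}-\varepsilon_{1}^{*}+\varepsilon_{2}^{*})\xi_{1}$. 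A direct check then shows the pseudo-curvature of $\theta+B$ equals $\mu+\omega+\eta$.

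The hard part is step~(i): realizing an integral degree-$3$ class by a central extension of $\Gamma$ itself, not merely of a Morita-equivalent refinement. This is precisely where properness cannot be dispensed with, through the vanishing $H^{>0}(N\Gamma,\mathcal{A}^{0})=0$; for a general Lie groupoid one obtains an extension only after refining $\Gamma$, and the conclusion would then have to be stated up to Morita equivalence. Once step~(i) is in hand, step~(iii) is a bounded diagram chase in the total de~Rham complex of $N\Gamma$ combined with the elementary behaviour of $c_{1}$ and of the section $\hat{s}$ under a shift of the connection form, and hypothesis~(2) serves only to make the $\Omega^{2}(\Gamma_{0})$-part of the correction available as the curving $B$.
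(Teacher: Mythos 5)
The paper does not actually prove this theorem; it is imported verbatim from Behrend--Xu \cite{Beh}\cite{Beh2}, so your reconstruction can only be judged against the cited source. Your architecture --- exponential sheaf sequence on $N\Gamma$, the Crainic-type vanishing $H^{p}\bigl(C^{\infty}(N\Gamma(\bullet)),d'\bigr)=0$ for $p>0$ obtained from a Haar system and cut-off function on the proper groupoid, followed by a correction of an arbitrary pseudo-connection by a total-degree-$2$ cochain --- is indeed the strategy of the original proof, and your steps (ii)--(iii) are essentially sound: the shifts $c_{1}(\theta)=c_{1}(\theta_{0})-\tfrac{1}{2\pi i}d\xi_{1}$ and $\hat{s}^{*}(\delta\theta)=\hat{s}^{*}(\delta\theta_{0})+d'\xi_{1}$, and the elimination of $\xi_{0}$ by the vanishing of degree-$2$ differentiable cohomology, are exactly the required moves.

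The genuine gap is in step (i), and your own step (iii) exposes it. You assert that integrality plus properness alone produce a central extension of $\Gamma_{1}\rightrightarrows\Gamma_{0}$ \emph{itself} with de~Rham Dixmier--Douady class $[\mu+\omega+\eta]$, with hypothesis (2) ``serving only'' to supply the curving $B$ afterwards. But if such an extension existed, every pseudo-connection on it would have pseudo-curvature whose $\Omega^{3}(\Gamma_{0})$-component is $dB$, hence exact; and, as you yourself compute, cohomologous total $3$-cocycles have $\Omega^{3}(\Gamma_{0})$-components differing by an exact form ($\mu=d\xi_{2}$ is forced by the bidegree-$(0,3)$ part of the coboundary). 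So your step (i) would make the exactness of $\mu$ a consequence of (1) and properness --- which is false: for the trivial (hence proper) groupoid $S^{3}\rightrightarrows S^{3}$ the total complex computes $H^{*}_{dR}(S^{3})$, and the generator of $H^{3}$ is integral with $\mu$ non-exact, yet admits no realizing extension. The mechanism is that a class in $H^{2}(N\Gamma,\mathcal{A}^{0}_{U(1)})$ is represented by a \v{C}ech--simplicial cocycle whose bidegree-$(0,2)$ part is a gerbe cocycle on $\Gamma_{0}$; an honest extension with unit space $\Gamma_{0}$ (rather than a refinement $\coprod U_{i}$) requires that part to be trivialized, and this is precisely where exactness of $\mu$ is consumed, in combination with properness. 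Your parenthetical ``again using properness to keep the extension on $\Gamma$ rather than on a refinement'' conceals exactly this step; hypothesis (2) must be moved from step (iii), where it is automatic, to step (i), where it is essential.
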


\begin{proposition}
We take $\hat{s}=\hat{s}_{nt}$ and $B=0$, then the following equations hold.
 $$\omega=c_1 ( \theta ),$$
$$\eta=-\left( \frac{-1}{2 \pi i} \right) \hat{s}_{nt}^{*} (\delta \theta ).$$
\end{proposition}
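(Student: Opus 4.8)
The plan is to unwind the definition of the pseudo-curvature in Theorem 4.2 in the special case $\hat{s}=\hat{s}_{nt}$ and $B=0$, and to check that the $3$-cocycle produced there coincides term by term with the cocycle exhibited just before Theorem 4.2. Recall that the pseudo-curvature $\mu+\omega+\eta$ is characterized by $\pi^*(\mu+\omega+\eta)=(d'+d'')(B+\theta)$ in the triple complex built on $N\widehat{\Gamma}$; with $B=0$ this reads $\pi^*(\mu+\omega+\eta)=(d'+d'')\theta$. First I would expand $(d'+d'')\theta$ along the relevant diagonal of the complex: $d''\theta=d\theta\in\Omega^2(\widehat{\Gamma}_1)$ is, up to the normalization $\left(\frac{-1}{2\pi i}\right)$, the pullback $\pi^*c_1(\theta)$, which forces $\omega=c_1(\theta)$ and (since $B=0$) $\mu=0$. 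This is exactly the content of Proposition 4.1's vertical arrow $c_1(\theta)\in\Omega^2(\Gamma_1)\mapsto -d(\hat{s}^*(\delta\theta))$ when we take $\hat{s}=\hat{s}_{nt}$.

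Next I would handle the $\eta$ term. The point is that $d'\theta\in\Omega^1(N\widehat{\Gamma}(2))$ is the connection $\delta\theta$ on $\delta\widehat{\Gamma}_1$ induced by $\theta$, and the natural section $\hat{s}_{nt}$ is the device that trivializes $\delta\widehat{\Gamma}_1$ (this is precisely the remark, following Definition 3.2, that $\hat{s}_{nt}$ is the natural section and $\delta\hat{s}_{nt}=1$). Pulling back $\delta\theta$ along $\hat{s}_{nt}$ gives a genuine $1$-form $\hat{s}_{nt}^*(\delta\theta)$ on $N\Gamma(2)$, and the sign and normalization in the statement of Theorem 4.2 (together with the $(-1)^p$ in the definition of $d''$ and the conventions in the $\mathrm{CD}$ diagrams) pin down $\eta=-\left(\frac{-1}{2\pi i}\right)\hat{s}_{nt}^*(\delta\theta)$. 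I would verify that this $\eta$ satisfies the two defining relations displayed before Theorem 4.2, namely $d'\eta=0$ and $d''\eta=-d'\omega$ (equivalently $d\eta$ equals the alternating sum $({\varepsilon}_0^*-{\varepsilon}_1^*+{\varepsilon}_2^*)c_1(\theta)$ up to sign); but these are exactly the two equations of Proposition 4.1, so nothing new needs to be proved.

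Concretely, the argument is: (1) note that $\hat{s}_{nt}$ is a global section of $\delta\widehat{\Gamma}_1$ with $\delta\hat{s}_{nt}=1$, so it is a legitimate choice for the section $\hat{s}$ appearing in Proposition 4.1; (2) apply Proposition 4.1 with this $\hat{s}$ to get the two-term cocycle $c_1(\theta)+\bigl(-\left(\frac{-1}{2\pi i}\right)\hat{s}_{nt}^*(\delta\theta)\bigr)$ living in $\Omega^2(\Gamma_1)\oplus\Omega^1(N\Gamma(2))$; (3) observe that since $B=0$ the pseudo-connection is just $\theta$, and compute $(d'+d'')\theta$ to see it hits $\pi^*\bigl(0+c_1(\theta)+\eta\bigr)$ for this $\eta$; (4) invoke the uniqueness built into the construction of the pseudo-curvature — once $B$ and $\hat s$ are fixed, the forms $\mu,\omega,\eta$ are determined — to conclude $\mu=0$, $\omega=c_1(\theta)$, $\eta=-\left(\frac{-1}{2\pi i}\right)\hat{s}_{nt}^*(\delta\theta)$.

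The main obstacle I expect is bookkeeping of signs and normalization constants: the $(-1)^p$ twist in $d''$, the factor $\left(\frac{-1}{2\pi i}\right)$ converting curvature to a real Chern form, the $(-1)^i$ in $d'$, and the placement of minus signs in the two $\mathrm{CD}$ diagrams must all be matched against the sign conventions implicit in Theorem 4.2. There is no deep geometry beyond Proposition 4.1 here; the proposition is essentially the statement that Proposition 4.1's two-term cocycle \emph{is} the pseudo-curvature when $\hat s=\hat s_{nt}$ and $B=0$, and the only real work is to confirm that the normalizations chosen in the respective statements are consistent.
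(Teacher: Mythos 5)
The paper does not actually prove this proposition --- its proof is the single line ``See \cite{Suz}'' --- so the comparison is with what that reference must contain, and your route is the right one: with $B=0$ the pseudo-curvature is characterized by $\pi^*(\mu+\omega+\eta)=(d'+d'')\theta$, and since $\widehat{\Gamma}_1\to\Gamma_1$ and $N\widehat{\Gamma}(2)\to N\Gamma(2)$ are surjective submersions, $\pi^*$ is injective and the components are uniquely determined (note that this uniqueness needs nothing about $\hat{s}$; the section never enters the defining equation, contrary to your step (4)). The identification $\omega=c_1(\theta)$ is then immediate from the definition of $c_1(\theta)$, and the entire content of the proposition is the remaining identity $\pi^*\bigl(\hat{s}_{nt}^{*}(\delta\theta)\bigr)=\varepsilon_0^*\theta-\varepsilon_1^*\theta+\varepsilon_2^*\theta$ on $N\widehat{\Gamma}(2)$. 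This is the step you assert rather than carry out: saying that ``$d'\theta$ is the connection $\delta\theta$'' conflates a $1$-form on $N\widehat{\Gamma}(2)$ with a connection on a bundle over $N\Gamma(2)$. What has to be checked, from the explicit formula $\hat{s}_{nt}(x_1,x_2)=[((x_1,x_2),\hat{x}_2),((x_1,x_2),\hat{m}(\hat{x}_1,\hat{x}_2))^{\otimes-1},((x_1,x_2),\hat{x}_1)]$ together with the fact that the connection induced on ${\varepsilon_0}^*\widehat{\Gamma}_1\otimes({\varepsilon_1}^*\widehat{\Gamma}_1)^{\otimes-1}\otimes{\varepsilon_2}^*\widehat{\Gamma}_1$ is the corresponding alternating sum of pullback connections, is that evaluating $\delta\theta$ along $\hat{s}_{nt}$ with the tautological lifts yields exactly $\varepsilon_0^*\theta-\varepsilon_1^*\theta+\varepsilon_2^*\theta$, and that the result does not depend on the choice of local lift --- which is precisely where centrality (commutativity of $U(1)$ and equivariance of $\hat{m}$) is used. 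Proposition 4.1 is not needed for the identification itself; it only re-confirms the cocycle property, so your step (2) is a detour. Finally, the sign and normalization tension you flag is real but is the paper's own: Theorem 4.2 as stated omits the factor $\frac{-1}{2\pi i}$ that reappears in Remark 7.1, so the defining equation must be read with that normalization for the claimed formulas for $\omega$ and $\eta$ to come out as stated.
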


\begin{proof}
See \cite{Suz}.
\end{proof}

\section{Double Lie groupoids and triple complexes}

We recall the definition of double Lie groupoids following \cite{Met}.

\begin{definition}

$$
\begin{array}{ccc}
{{\Gamma^V_1}}&{\leftleftarrows}&{\bar{{\Gamma}}}\\
{\downdownarrows}&&{\downdownarrows}\\
{\Gamma_0}&{\leftleftarrows}&{\Gamma_1^H}
\end{array}
$$
The square above is a double Lie groupoid if the following conditions hold:

(1)~The horizontal and vertical source and target maps commute:
$$s \circ s_H=s \circ s_V,~~~~~~~~~t \circ t_H= t \circ t_V$$
$$t \circ s_H= s \circ t_V,~~~~~~~~~s \circ t_H= t \circ s_V.$$
Here we use $s$ and $t$ to denote the source and target maps from either ${\Gamma^V_1}$ or ${\Gamma^H_1}$ to $\Gamma_0$.\\

(2)~The following equations hold:
$$s_V(m_H(x_1,x_2))=m(s_V(x_1),s_V(x_2)),~~~~~t_V(m_H(x_1,x_2))=m(t_V(x_1),t_V(x_2)),$$
$$s_H(m_V(x_1,x_2))=m(s_H(x_1),s_H(x_3)),~~~~~t_H(m_V(x_1,x_3))=m(s_H(x_1),s_H(x_2)),$$
for all $x_i \in \bar{\Gamma}$ such that $s_Hx_1=t_Hx_2$ and $s_Vx_1=t_Vx_3$.\\

(3)~The interchange law
$$m_V(m_H(x_{11},x_{12}),m_H(x_{21},x_{22}))=m_H(m_V(x_{11},x_{21}),m_V(x_{12},x_{22}))$$
holds for $x_{11}, x_{12}, x_{21}, x_{22} \in \bar{\Gamma}$ such that $s_H(x_{i1})=t_H(x_{i2})$ and $s_V(x_{1i})=t_V(x_{2i})$
for $i=1,2$.\\

(4)~The double-source map $(s_V,s_H):\bar{\Gamma} \rightarrow {\Gamma^V_1} _s\times_s {\Gamma^H_1}$ is a submersion,
where ${\Gamma^V_1} _s\times_s {\Gamma^H_1}:=\{(x_1,x_2)\in {\Gamma^V_1} \times {\Gamma^H_1}~|~s(x_1)=s(x_2) \}$.

\end{definition}

\begin{example}
For any smooth manifold $M$, we have a following double Lie groupoid: 
$$
\begin{array}{ccc}
{M}&{\leftleftarrows}&{M}\\
{\downdownarrows}&&{\downdownarrows}\\
{M}&{\leftleftarrows}&{M}
\end{array}
$$
\end{example}

\begin{example}
Let $G$ be a Lie group and $H$ be a subgroup of $G$.
We define the product of $G \rtimes H$ as: $(g_1,h_1) \rtimes (g_2,h_2):= (g_1h_1g_2h^{-1}_1, h_1h_2)$.
Then we have a following double Lie groupoid: 
$$
\begin{array}{ccc}
{H}&{\leftleftarrows}&{G \rtimes H}\\
{\downdownarrows}&&{\downdownarrows}\\
{*}&{\leftleftarrows}&{G}
\end{array}
$$
\end{example}

\begin{example}
Let $\Gamma_1 \rightrightarrows \Gamma_0$ be a Lie groupoid.
Then the following square is a double Lie groupoid.
$$
\begin{array}{ccc}
{{\Gamma_1}}&{\leftleftarrows}&{{\Gamma}_1 \times {\Gamma}_1}\\
{\downdownarrows}&&{\downdownarrows}\\
{\Gamma_0}&{\leftleftarrows}&{\Gamma_0 \times \Gamma_0}
\end{array}
$$
Here the horizontal edges are pair groupoids. This double groupoid is called a pair double
groupoid.
\end{example}

\begin{example}
Let $\Gamma_1 \rightrightarrows \Gamma_0$ be a $G$-groupoid, i.e. both $\Gamma_1$ and $\Gamma_0$ are 
$G$-manifolds and all structure maps are $G$-equivariant \cite{Gin}. Then the following square is a double Lie groupoid.
$$
\begin{array}{ccc}
{{\Gamma_1}}&{\leftleftarrows}&{{\Gamma}_1 \times G}\\
{\downdownarrows}&&{\downdownarrows}\\
{\Gamma_0}&{\leftleftarrows}&{\Gamma_0 \times G}
\end{array}
$$
Here the horizontal edges are action groupoids.
\end{example}

You can find more examples in \cite{Met}.\\

A bisimplicial manifold $\{ X_{*,*} \}$ is a sequence of manifolds with horizontal and vertical face and degeneracy operators which commute with each other.

Given a double Lie groupoid, we can define a bisimplicial manifold as follows\cite{Met}:

For $p,q \ge 1$,
$$N\bar{{\Gamma}}(p,q)  := \{ (x_{ij})_{1 \le i \le p, 1 \le j \le q} |$$
$$ x_{ij} \in \bar{\Gamma},  t_H(x_{ij})=s_H(x_{i(j+1)}),  \enspace t_V(x_{ij})=s_V(x_{(i+1)j})\}$$
For $q=0$, $N\bar{{\Gamma}}(p,0):=N\Gamma_1^H(p)$ and for $p=0$, $N\bar{{\Gamma}}(0,q):=N\Gamma_1^V(q)$.

\begin{definition}
For a bisimplicial manifold $\{ X_{*,*} \}$, we can construct a triple complex on it in the following way:

$${\Omega}^{p,q,r} (X_{*,*}) := {\Omega}^{r} (X_{p,q}) $$

Derivatives are:
$$ d' := \sum _{i=0} ^{p+1} (-1)^{i} ({{\varepsilon}^{Ho} _{i}}) ^{*}  , \qquad  d'' := \sum _{i=0} ^{q+1} (-1)^{i} ({{\varepsilon}^{Ve} _{i}}) ^{*} \times (-1)^{p}, $$
$$ d''' :=  (-1)^{p+q} \times {\rm the \enspace exterior \enspace differential \enspace on \enspace }{ \Omega ^*(X_{p,q}) }.$$
Here ${\varepsilon}^{Ho} _{i}$ and ${\varepsilon}^{Ve} _i$ are horizontal and vertical face operators of $\{X_{*,*}\}$.
\end{definition}

\section{
A central $U(1)$-extension of a double Lie groupoid}

\begin{definition}
A central $U(1)$-extension of a double Lie groupoid consists of $U(1)$-extensions of Lie groupoids
$$
\begin{array}{ccc}
{\widehat{\Gamma_1^V}}&{\rightrightarrows}&{{\Gamma_0}}\\
{\downarrow{\pi_V}}&&{\downarrow{\rm id}}\\
{\Gamma_1^V}&{\rightrightarrows}&{\Gamma_0}
\end{array}
$$
and 
$$
\begin{array}{ccc}
{\widehat{\Gamma_1^H}}&{\rightrightarrows}&{{\Gamma_0}}\\
{\downarrow{\pi_H}}&&{\downarrow{\rm id}}\\
{\Gamma_1^H}&{\rightrightarrows}&{\Gamma_0}
\end{array}
$$
and a section $\bar{s}$ of $U(1)$-bundle $\delta \widehat{\bar{\Gamma}}:= {s_V}^*\widehat{\Gamma_1^V }\otimes ({t_V}^*\widehat{\Gamma_1^V })^{\otimes -1} \otimes ({s_H}^*\widehat{\Gamma_1^H })^{\otimes-1} \otimes {t_H}^*\widehat{\Gamma_1^H }
\rightarrow \bar{\Gamma}$
which satisfies the following conditions:

(1)~$\delta \bar{s}=\delta s^H_{nt}$ on $\delta(\delta \widehat{\bar{\Gamma}}) \rightarrow N\bar{\Gamma}(2,1)$.

(2)~$\delta \bar{s}=\delta s^V_{nt}$ on $\delta(\delta \widehat{\bar{\Gamma}}) \rightarrow N\bar{\Gamma}(1,2)$.

\end{definition}

Here $\delta s^H_{nt}$ is a natural section of $\delta \widehat{\Gamma}_1^H \rightarrow N\bar{\Gamma}(2,0)$ and  $\delta s^V_{nt}$ is a natural section of $\delta \widehat{\Gamma}_1^V \rightarrow N\bar{\Gamma}(0,2)$.

\begin{example}
It is well-known that the free loop group of $SU(2)$ have a non-trivial central $U(1)$-extension \cite{Bry}\cite{Pre}.
So the following double Lie groupoid have a central $U(1)$-extension.
$$
\begin{array}{ccc}
{LSU(2)}&{\leftleftarrows}&{LSU(2) \rtimes LSU(2)}\\
{\downdownarrows}&&{\downdownarrows}\\
{*}&{\leftleftarrows}&{LSU(2)}
\end{array}
$$
\end{example}

\begin{example}
Let $\Gamma_1 \rightrightarrows \Gamma_0$ be a $G$-groupoid assume that it have a central $U(1)$-extension. 
Then the double Lie groupoid in the example 5.4 have a central $U(1)$-extension. 
\end{example}

\section{A cocycle in the triple complex}

\begin{definition}
Let $\theta^V \in \Omega^1(\widehat{\Gamma_1^V})$ be a connection of $U(1)$-bundle $\widehat{\Gamma_1^V} \rightarrow {\Gamma_1^V}$ and
$\theta^H \in \Omega^1(\widehat{\Gamma_1^H})$ be a connection of $U(1)$-bundle $\widehat{\Gamma_1^H} \rightarrow {\Gamma_1^H}$.
We call $\theta^V + \theta^H$ a connection of a central $U(1)$-extension of a double Lie groupoid.

\end{definition}

\begin{theorem}
For any central $U(1)$-extension of a double Lie groupoid, we can construct a $3$-cocycle $\omega^V + \omega^H + \bar{\eta}+\eta^V+\eta^H \in \Omega^3(N\bar{\Gamma}),~\omega^V \in {\Omega}^{2} (\Gamma_1^V ), \omega^H \in {\Omega}^{2} (\Gamma_1^H),\bar{\eta} \in {\Omega}^{1} (\bar{\Gamma}), \eta^V \in \Omega^1(N\Gamma^V(2)), \eta^H \in \Omega^1(N\Gamma^H(2)) $.
\end{theorem}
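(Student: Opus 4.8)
The plan is to construct the required $3$-cocycle by amalgamating the two Behrend--Xu cocycles attached to the vertical and the horizontal $U(1)$-extension, joined through a bridging $1$-form on $\bar{\Gamma}$ built from the section $\bar{s}$. Fix a connection $\theta^{V}+\theta^{H}$ as in Definition~7.1, and let $\delta\theta$ be the connection induced on $\delta\widehat{\bar{\Gamma}}\to\bar{\Gamma}$ by pulling $\theta^{V}$ back along the two maps $\bar{\Gamma}\to\Gamma_{1}^{V}$ and $\theta^{H}$ along the two maps $\bar{\Gamma}\to\Gamma_{1}^{H}$ and tensoring, exactly the way $\delta\widehat{\bar{\Gamma}}$ itself is assembled. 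Applying Propositions~4.1 and~4.2 (with $\hat{s}=\hat{s}_{nt}$ and $B=0$) to each of the two Lie groupoid central extensions separately produces $\omega^{V}:=c_{1}(\theta^{V})\in\Omega^{2}(\Gamma_{1}^{V})$ and $\eta^{V}:=-\left(\frac{-1}{2\pi i}\right)\hat{s}_{nt}^{*}(\delta\theta^{V})\in\Omega^{1}(N\Gamma^{V}(2))$, and likewise $\omega^{H}$ and $\eta^{H}$; there $\omega^{V}+\eta^{V}$ is a $3$-cocycle in the double complex of $N\Gamma^{V}$ and $\omega^{H}+\eta^{H}$ is one in that of $N\Gamma^{H}$. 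Put also $\bar{\eta}:=-\left(\frac{-1}{2\pi i}\right)\bar{s}^{*}(\delta\theta)\in\Omega^{1}(\bar{\Gamma})$. The claim is that $\omega^{V}+\omega^{H}+\bar{\eta}+\eta^{V}+\eta^{H}$ is the desired cocycle.

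Under the identifications $N\Gamma^{V}(q)=N\bar{\Gamma}(0,q)$ and $N\Gamma^{H}(p)=N\bar{\Gamma}(p,0)$ the two cocycles above sit in the $p=0$ column and the $q=0$ row of the triple complex, and on those loci the operators $d''+d'''$ and $d'+d'''$ restrict, up to the sign normalisations of Definition~5.2, to the total differentials of the respective double complexes; hence $(d''+d''')(\omega^{V}+\eta^{V})=0$ and $(d'+d''')(\omega^{H}+\eta^{H})=0$. Expanding $(d'+d''+d''')(\omega^{V}+\omega^{H}+\bar{\eta}+\eta^{V}+\eta^{H})$ and sorting by tridegree, every summand is forced to vanish by these two relations or by the closedness of $c_{1}(\theta^{V})$ and $c_{1}(\theta^{H})$, except for three $1$-form identities: (a) $d'\omega^{V}+d''\omega^{H}+d'''\bar{\eta}=0$ on $N\bar{\Gamma}(1,1)=\bar{\Gamma}$; (b) $d'\bar{\eta}+d''\eta^{H}=0$ on $N\bar{\Gamma}(2,1)$; (c) $d''\bar{\eta}+d'\eta^{V}=0$ on $N\bar{\Gamma}(1,2)$. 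Identity (a) is straightforward: on $\bar{\Gamma}$ the $2$-form $d'\omega^{V}+d''\omega^{H}$ is exactly the signed combination of the pullbacks of $c_{1}(\theta^{V})$ and $c_{1}(\theta^{H})$ along $s_{V},t_{V},s_{H},t_{H}$ dictated by the tensor product defining $\delta\widehat{\bar{\Gamma}}$, i.e.\ the normalised curvature of $\delta\theta$; since $\bar{s}$ is a global section this curvature is exact with primitive $\bar{s}^{*}(\delta\theta)$ up to the factor $\frac{-1}{2\pi i}$, which is precisely $-d'''\bar{\eta}$ with the conventions of Definition~5.2.

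The core of the argument is (b) and (c). For (b) I would unwind the iterated bundle $\delta(\delta\widehat{\bar{\Gamma}})$ over $N\bar{\Gamma}(2,1)$: the $\widehat{\Gamma_{1}^{V}}$-factors $s_{V}^{*}\widehat{\Gamma_{1}^{V}}\otimes(t_{V}^{*}\widehat{\Gamma_{1}^{V}})^{\otimes-1}$ form a coboundary for the groupoid $\bar{\Gamma}\rightrightarrows\Gamma_{1}^{V}$, so their $\delta$ is canonically the trivial bundle with the trivial connection, while the $\widehat{\Gamma_{1}^{H}}$-factors, through $s_{H}\circ m_{V}=m\circ(s_{H}\times s_{H})$ and $t_{H}\circ m_{V}=m\circ(t_{H}\times t_{H})$ from Definition~5.1(2), regroup into the $d''$-coboundary $\bigl((\varepsilon^{Ve}_{0})^{*}\delta\widehat{\Gamma_{1}^{H}}\bigr)^{\otimes-1}\otimes(\varepsilon^{Ve}_{1})^{*}\delta\widehat{\Gamma_{1}^{H}}$ carrying the connection obtained the same way from $\delta\theta^{H}$. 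Under this identification the section $\delta s^{H}_{nt}$ of Definition~6.1 becomes the $\delta$ of $\hat{s}_{nt}$ on the $\widehat{\Gamma_{1}^{H}}$-side, so condition~(1) of Definition~6.1, namely $\delta\bar{s}=\delta s^{H}_{nt}$, forces the pullbacks of the induced connection $\delta(\delta\theta)$ along the two sides to agree as $1$-forms on $N\bar{\Gamma}(2,1)$; spelling out the definitions of $\bar{\eta}$ and $\eta^{H}$ turns this into (b). Identity (c) is the mirror image under $H\leftrightarrow V$ and $\varepsilon^{Ho}\leftrightarrow\varepsilon^{Ve}$, using condition~(2) of Definition~6.1. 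Together with (a) and the two double-complex relations, this shows that $\omega^{V}+\omega^{H}+\bar{\eta}+\eta^{V}+\eta^{H}$ is closed in the total triple complex, which completes the construction.

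I expect (b)--(c) to be the genuine obstacle: correctly matching the low-degree horizontal and vertical face operators of $N\bar{\Gamma}$ with the structure maps $s_{V},t_{V},s_{H},t_{H},m_{V},m_{H}$; verifying that the $\widehat{\Gamma_{1}^{V}}$- and $\widehat{\Gamma_{1}^{H}}$-contributions to $\delta(\delta\widehat{\bar{\Gamma}})$ over $N\bar{\Gamma}(2,1)$ and $N\bar{\Gamma}(1,2)$ split off exactly as asserted, which is where the compatibility and interchange laws of Definition~5.1 are used; and keeping track of the signs prescribed by Definition~5.2 so that conditions (1)--(2) of Definition~6.1 really do yield (b)--(c) with the correct signs, if necessary by absorbing compensating signs into $\bar{\eta}$, $\eta^{V}$ and $\eta^{H}$.
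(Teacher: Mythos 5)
Your construction is essentially identical to the paper's: the same choices of $\omega^V$, $\omega^H$, $\eta^V$, $\eta^H$ from Propositions 4.1--4.2, the same bridging form $\bar{\eta}=-\left(\frac{-1}{2\pi i}\right)\bar{s}^*(\delta\theta)$, the same reduction to the three identities (a)--(c), and the same appeal to conditions (1)--(2) of Definition 6.1 for (b)--(c) (the paper in fact asserts these identities with no more detail than you give). The only discrepancy is that the paper inserts explicit signs, taking $\omega^H=-c_1(\theta^H)$ and $\eta^H=+\left(\frac{-1}{2\pi i}\right){s^H_{nt}}^*(\delta\theta^H)$, to match the $(-1)^p$ and $(-1)^{p+q}$ conventions of Definition 5.2 --- exactly the sign absorption you anticipate at the end.
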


\begin{proof}
We set:
$$\omega^V:=c_1 ( \theta^V ),~~~~\omega^H:=-c_1 ( \theta^H )$$
$$\eta^V:=-\left( \frac{-1}{2 \pi i} \right) {s^V_{nt}}^{*} (\delta \theta^V ),~~~~\eta^H:=\left( \frac{-1}{2 \pi i} \right) {s^H_{nt}}^{*} (\delta \theta^H ).$$
Then $d''\omega^V + d'''\mu^V=0$ and $d'\omega^H + d'''\mu^H=0$ follows from the proposition 4.1.

We define $\bar{\eta}$ as:
$$\bar{\eta}:=-\left( \frac{-1}{2 \pi i} \right) \bar{s}^*\delta(\delta \theta^H \delta \theta^V).$$
Then we can see that the equations $d'\bar{\eta}+d''\eta^H=0$, $d''\bar{\eta}+d'\eta^V=0$, $d'''\bar{\eta}+d'\omega^V+d''\omega^H=0$ hold because the properties of $\bar{s}$, therefore $\omega^V + \omega^H + \bar{\eta}+\eta^V+\eta^H$ is a cocycle.
\end{proof}
\begin{remark}
If the following double Lie groupoid exists,
$$
\begin{array}{ccc}
\widehat{{\Gamma^V_1}}&{\leftleftarrows}&\widehat{\bar{{\Gamma}}}\\
{\downdownarrows}&&{\downdownarrows}\\
\widehat{\Gamma_0}&{\leftleftarrows}&\widehat{\Gamma_1^H}
\end{array}
$$
and $s$ have a kind of naturality, i.e. $\displaystyle -\left( \frac{-1}{2 \pi i} \right)(d'\theta^V+d''\theta^H)= \pi^*\bar{\eta}$ holds,
then the following equation holds true:
$${\pi_V}^*(\omega^V + \mu^V)+{\pi_H}^*(\omega^H + \mu^H)+\bar{\pi}^*\bar{\eta} =-\left( \frac{-1}{2 \pi i} \right)(d'+d''+d''')(\theta^V+ \theta^H).$$
\end{remark}

\end{document}